\newtheorem{thm}{Theorem}[section]
\newtheorem{co}{Corollary}[section]
\newcommand{\R}{{\rm I}\kern-0.18em{\rm R}}
\newcommand{\1}{{\rm 1}\kern-0.25em{\rm I}}
\newcommand{\E}{{\rm I}\kern-0.18em{\rm E}}
\newcommand{\p}{{\rm I}\kern-0.18em{\rm P}}
\author{Lev B Klebanov\footnote{Department of Probability and Statistics, MFF, Charles University, Prague-8, 18675, Czech Republic, e--mail: levbkl@gmail.com},  Irina V. Volchenkova\footnote{Department of Probability and Statistics, MFF, Charles University, Prague-8, 18675, Czech Republic, e--mail: i.v.volchenkova@gmail.com}, Ashot V. Kakosyan\footnote{Yerevan State University, Yerevan, Armenia}}
\title{ On a characterization of infinitely divisible distributions with Gaussian component}
\date{}
\begin{document}
\maketitle

\begin{abstract}
We give a necessary and sufficient condition for symmetric infinitely divisible distribution to have Gaussian component. The result can be applied to approximation the distribution of finite sums of random variables. Particularly, it  shows that for a large class of distributions with finite variance stable approximation appears to be better than Gaussian. 

\noindent
{\bf keywords}: infinitely divisible distributions; Gaussian component; approximations of sums of random variables.
\end{abstract}

\section{Formulation of the problem and main result}\label{s1}
\setcounter{equation}{0}

Let $f(t)$ be infinitely divisible characteristic function of a random variable $X$. As usual (see, for example, \cite{LO}), we say $X$ has Gaussian component if there are two independent random variables $Y_1,Y_2$ such that $X=Y_1+Y_2$, and $Y_1$ has non-degenerate Gaussian distribution. In opposite case we say, $f$ is without Gaussian component. Our aim is to give a characterization of infinitely divisible distributions with Gaussian component. 
\begin{thm}\label{th1}
Suppose that $f(t)$ is a symmetric infinitely divisible characteristic function. For any positive integer $m$ 
$f_m(t)=f^{1/m}(\sqrt{m}t)$ is a characteristic function. The following limit
\begin{equation}\label{eq1}
\lim_{m\to \infty}f_m(t) =g(t),
\end{equation}
exists. Here $g(t)$ is characteristic function of Gaussian or degenerate at zero distribution. The function $f(t)$ is without Gaussian component if and only if $g(t)=1$ for all $t$.\footnote{Let us note that we have no moment conditions in Theorem \ref{th1}}
\end{thm}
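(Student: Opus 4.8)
The plan is to reduce everything to the Lévy--Khintchine representation of a symmetric infinitely divisible law. Write $\log f(t) = -\tfrac{a t^2}{2} + \int_{\mathbb{R}\setminus\{0\}}(\cos tx - 1)\,d\nu(x)$, where $a\ge 0$ and $\nu$ is a symmetric Lévy measure, so that $\int_{0<|x|\le 1} x^2\,d\nu(x) + \nu(\{|x|>1\}) < \infty$. I would first recall the classical fact (see \cite{LO}) that $f$ has a Gaussian component if and only if $a>0$: this uses that any independent summand of an infinitely divisible law is again infinitely divisible, so that in $X=Y_1+Y_2$ one may add Lévy--Khintchine exponents, and the Gaussian coefficient is additive under convolution. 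Since $f$ is infinitely divisible, $f^{1/m}=\exp(\tfrac1m\log f)$ is itself an infinitely divisible characteristic function, say of a variable $Z_m$; hence $f_m(t)=f^{1/m}(\sqrt m\,t)$ is the characteristic function of $\sqrt m\,Z_m$, which settles the first assertion of the theorem.

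Next I would compute the exponent of $f_m$ directly. From the representation above,
$$\log f_m(t) = \frac1m\log f(\sqrt m\, t) = -\frac{a t^2}{2} + R_m(t), \qquad R_m(t) := \frac1m\int_{\mathbb{R}\setminus\{0\}}\bigl(\cos(\sqrt m\,t x)-1\bigr)\,d\nu(x).$$
Thus the Gaussian part of $f_m$ never moves, and the whole content of the theorem is the claim that $R_m(t)\to 0$ as $m\to\infty$ for every fixed $t$. Granting this, $f_m(t)\to g(t):=e^{-a t^2/2}$ pointwise, and $g$ is the characteristic function of the $N(0,a)$ law --- genuinely Gaussian if $a>0$, and degenerate at $0$, i.e. $g\equiv 1$, if $a=0$. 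Combining with the classical characterization quoted above yields exactly the equivalence ``$f$ has no Gaussian component $\iff g\equiv 1$''.

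To prove $R_m(t)\to 0$, fix $t$ and $\varepsilon>0$ and split the integral defining $R_m(t)$ at a level $|x|=\delta$. On $\{|x|>\delta\}$ use $|\cos\theta-1|\le 2$ to bound the contribution by $\tfrac{2}{m}\,\nu(\{|x|>\delta\})$, which tends to $0$ as $m\to\infty$ because $\nu(\{|x|>\delta\})<\infty$. On $\{0<|x|\le\delta\}$ use $|\cos\theta - 1| = 2\sin^2(\theta/2)\le \theta^2/2$ to bound the contribution by $\tfrac1m\cdot\tfrac12 m t^2\int_{0<|x|\le\delta}x^2\,d\nu(x) = \tfrac{t^2}{2}\int_{0<|x|\le\delta}x^2\,d\nu(x)$, which is $<\varepsilon/2$ once $\delta$ is small, since $\int_{0<|x|\le 1}x^2\,d\nu(x)<\infty$. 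Both bounds are uniform for $t$ in a compact set, so in fact $f_m\to g$ locally uniformly, re-confirming that $g$ is continuous and hence a genuine characteristic function. The only place where care is needed is precisely this two-regime estimate --- the split must be performed in the right order, first choosing $\delta$ from the finiteness of $\int_{|x|\le\delta}x^2\,d\nu$ and only then letting $m\to\infty$ --- together with the invocation of the standard fact that summands of infinitely divisible laws are infinitely divisible, which is what ties the computed limit $e^{-at^2/2}$ back to the notion of ``Gaussian component'' in the statement.
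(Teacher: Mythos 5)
Your computation of the limit is correct and is, at its core, the same argument as the paper's: the paper writes the symmetric L\'evy--Khinchine exponent with the finite Khinchine measure $\theta$ (Gaussian jump $\sigma^2$ at the origin plus an integral term) and shows the integral term is $o(t^2)$ as $t\to\infty$ by splitting the integral at $\min(1,1/\sqrt t)$; you write the exponent with the L\'evy measure $\nu$ and show, for fixed $t$, that $R_m(t)=\frac1m\int(\cos(\sqrt m\,tx)-1)\,d\nu(x)\to 0$ by splitting at a fixed small $\delta$. These are the same two-regime estimate in slightly different coordinates, and both give $f_m(t)\to e^{-at^2/2}$ pointwise with $a$ the Gaussian coefficient, so the limit $g$ is Gaussian or identically $1$.

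The one genuine flaw is your justification of the ``classical fact'' that a Gaussian component forces $a>0$. You argue that any independent summand of an infinitely divisible law is again infinitely divisible, so that L\'evy--Khinchine exponents, and in particular Gaussian coefficients, add. That auxiliary claim is false: infinitely divisible laws may have factors that are not infinitely divisible (this is a central theme of \cite{LO}; the class of infinitely divisible laws all of whose factors are infinitely divisible is a proper subclass), and in the paper's definition $Y_2$ is not assumed infinitely divisible, so it need not have a L\'evy--Khinchine exponent at all. The needed implication is nonetheless true and follows from the estimate you have already established: if $X=Y_1+Y_2$ with $Y_1\sim N(0,\tau^2)$, $\tau>0$, and $Y_2$ independent of $Y_1$, then $|f(t)|\le e^{-\tau^2 t^2/2}$ for all $t$, while your splitting argument (run with $t\to\infty$ rather than $m\to\infty$, or along $t=\sqrt m$) gives $\log f(t)=-\frac{a t^2}{2}+o(t^2)$; dividing by $t^2$ and letting $t\to\infty$ yields $a\ge\tau^2>0$. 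Conversely, if $a>0$ then $e^{-at^2/2}$ is an explicit Gaussian factor of $f$. With this repair the equivalence ``$f$ has no Gaussian component $\iff g\equiv 1$'' is complete, and indeed more explicit than in the paper, which only remarks that absence of a Gaussian component forces $\sigma=0$.
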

\begin{proof}Taking into account the fact of symmetry of $f(t)$ we can write Levy-Khinchine representation for it in the form
\[ \log f(t) = \int_{-\infty}^{\infty}\Bigl( \cos(tx)-1\Bigr)\frac{1+x^2}{x^2}d\theta (x)= \]
\[  =-\sigma^2 t^2 -4\int_{+0}^{\infty}\sin^2(tx/2)\frac{1+x^2}{x^2}d\theta (x), \]
where $\sigma^2 \geq 0$ is a jump of $\theta$ at zero. 

Let us show that $\int_{+0}^{\infty}\sin^2(tx/2)\frac{1+x^2}{x^2}d\theta (x)=o(t^2)$ as $t \to \infty$. For positive $t$ define $\varepsilon =\min (1,1/\sqrt{t})$. Really, for $t>1$ we have
\[  \int_{0}^{\infty}\sin^2(t x/2)\frac{1+x^2}{x^2}d\theta (x) = \]
\[ = t^2/2 \int_{0}^{\varepsilon}\Bigl(\frac{\sin (tx/2)}{tx/2}\Bigr)^2(1+x^2)d \theta (x)+
2 \int_{\varepsilon}^{\infty}\sin^2(tx/2) \frac{1+x^2}{x^2}d\theta (x) \leq \]
\[ \leq t^2/2(1+1/t)*(\theta (1/\sqrt{t})-\theta(0))+C (1+t) = o(t^2), \]
where $C>0$ is a constant. 

Now we see, that
\[ f_m(t) =\exp \Bigl( -\sigma^2 t^2 -\frac{1}{m} o(m*t^2) \Bigr) \stackrel[m \to \infty]{}{\longrightarrow} \exp \{-\sigma^2 t^2\}. \]
It is easy to see, that if $f(t)$ has no Gaussian component then $\sigma =0$, and $f_m(t) \to 1$ as $t \to \infty$.
\end{proof}

Now it is easy to obtain a characterization of Gaussian distribution.
\begin{co}\label{c1}In the conditions of Theorem \ref{th1} the variance of random variable with characteristic function $f(t)$ equals to that of limit variable with characteristic function $g(t)$ if and only if $f(t)$ is characteristic function of Gaussian distribution.
\end{co}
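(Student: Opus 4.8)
The plan is to compare the variances of $X$ and of the limit variable $Y$ (the one with characteristic function $g$) directly, using the canonical representation of $\log f$ already displayed in the proof of Theorem~\ref{th1}. Recall from that proof that $g(t)=\exp\{-\sigma^2 t^2\}$, so $Y$ is $N(0,2\sigma^2)$ and $\mathrm{Var}(Y)=2\sigma^2$ (the law degenerate at $0$ when $\sigma=0$), whereas
\[
\log f(t)=-\sigma^2 t^2-4\int_{+0}^{\infty}\sin^2(tx/2)\,\frac{1+x^2}{x^2}\,d\theta(x).
\]
So it suffices to read off $\mathrm{Var}(X)$ from this representation and see when it can be as small as $2\sigma^2$.

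First I would express $\mathrm{Var}(X)$ through the representation. Since $f$ is symmetric it is real with $1-f(t)\ge 0$, and for symmetric $f$ one has $\mathrm{Var}(X)=\lim_{t\to 0}2\bigl(1-f(t)\bigr)/t^2$, a limit valid in $[0,\infty]$. Substituting the representation, using $1-e^{-u}\sim u$ as $u\to 0$ and $\sin^2(tx/2)/t^2\to x^2/4$ as $t\to 0$, together with the uniform bound $0\le(\sin u)^2/u^2\le 1$ (dominated convergence when the integral below is finite, Fatou's lemma otherwise), I get
\[
\mathrm{Var}(X)=2\sigma^2+2\int_{+0}^{\infty}(1+x^2)\,d\theta(x),
\]
a value in $[2\sigma^2,\infty]$. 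Equivalently, this is the standard formula expressing the second moment of an infinitely divisible law as its Gaussian part plus the integral of $x^2$ against the Lévy measure $\frac{1+x^2}{x^2}\,d\theta(x)$.

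Then the corollary follows. The equality $\mathrm{Var}(X)=\mathrm{Var}(Y)=2\sigma^2$ holds precisely when $\int_{+0}^{\infty}(1+x^2)\,d\theta(x)=0$, i.e. when $\theta$ carries no mass on $(0,\infty)$; in that case $\log f(t)=-\sigma^2 t^2$ for all $t$, so $f$ is the characteristic function of $N(0,2\sigma^2)$, a Gaussian law. For the converse, if $f(t)=e^{-at^2}$ is Gaussian then its canonical representation has $\sigma^2=a$ and $\theta$ with no mass off the origin, whence $g=f$ and the two variances coincide trivially.

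The only genuinely delicate point is the rigorous interchange of the limit in $t$ with the integral over $x$ when $\mathrm{Var}(X)$ may be infinite; this is dispatched by Fatou's lemma and the bound on $(\sin u)^2/u^2$, and everything else is bookkeeping with the Lévy--Khinchine representation already established for Theorem~\ref{th1}. A minor point to keep in mind is the borderline case $\sigma=0$, where ``Gaussian'' must be read as including the law degenerate at $0$, consistently with the wording of Theorem~\ref{th1}.
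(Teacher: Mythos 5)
Your proof is correct, but it takes a genuinely different route from the paper's. The paper argues structurally: the hypothesis forces $\mathrm{Var}(X)$ to be finite and equal to the variance of the limit law, and since the representation $\log f(t)=-\sigma^2t^2-4\int_{+0}^{\infty}\sin^2(tx/2)\frac{1+x^2}{x^2}\,d\theta(x)$ factors $X$ as $Y_1+Y_2$ with $Y_1$ Gaussian having exactly the limit's variance and $Y_2$ independent, additivity of variances gives $\mathrm{Var}(Y_2)=0$, so $Y_2$ is degenerate and $X$ is Gaussian. You instead extract the explicit formula $\mathrm{Var}(X)=2\sigma^2+2\int_{+0}^{\infty}(1+x^2)\,d\theta(x)\in[2\sigma^2,\infty]$ from the same representation and note that equality with $\mathrm{Var}(Y)=2\sigma^2$ holds exactly when $\theta$ carries no mass on $(0,\infty)$, i.e. when $\log f(t)=-\sigma^2t^2$. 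The paper's argument is shorter and involves no analysis, but it leans on the decomposition into a Gaussian component plus an independent remainder and on ``zero variance implies degenerate''; your computation is more self-contained, treats a possibly infinite $\mathrm{Var}(X)$ in one stroke (the interchange of limits via the bound $\sin^2 u\le u^2$ together with Fatou is exactly the right justification), makes the trivial converse explicit, and yields as a by-product the quantitative deficit $\mathrm{Var}(X)-\mathrm{Var}(Y)=2\int_{+0}^{\infty}(1+x^2)\,d\theta(x)$, which is precisely the strict inequality asserted in Corollary \ref{c2}. Both proofs must read ``Gaussian'' as allowing the degenerate law when $\sigma=0$, as you point out; also note that your normalization $\mathrm{Var}(Y)=2\sigma^2$ is the correct one for $g(t)=e^{-\sigma^2t^2}$ (the paper's mention of variance $\sigma^2$ is an immaterial slip).
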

\begin{proof} The random variable $X$ with characteristic function $f(t)$ must has finite variance $\sigma^2$ because the limit distribution has such variance. However, $X=Y_1+Y_2$, where $Y_1$ has Gaussian distribution with variance $\sigma^2$, and therefore $Y_2$ has zero variance which means, that it has degenerate distribution.
\end{proof}
The next Corollary shows, that in condition of Theorem \ref{th1} we do not have convergence of variances.
\begin{co}\label{c2} Let $X$, $X_m$ be random variables with characteristic functions $f(t)$ and $f_m(t)$, correspondingly. Then $Var(X)=Var(X_m)$, $m=2,3, \ldots$. If $X$ has non-Gaussian distribution, then the variance of $X$ is strongly greater than variance of limit distribution with characteristic function $g(t)$.
\end{co}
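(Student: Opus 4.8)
The plan is to split the statement into its two assertions: the identity $Var(X)=Var(X_m)$ for every $m\ge 2$, and the strict inequality $Var(X)>Var(g)$ when $X$ is non-Gaussian. The first is essentially immediate once we exploit the $m$-fold decomposition coming from infinite divisibility, while the second is a short computation with the L\'evy--Khinchine representation already displayed in the proof of Theorem \ref{th1}.

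For the identity, I would argue through the i.i.d. decomposition. Since $f$ is infinitely divisible, $f^{1/m}$ is a characteristic function; let $Z$ be a random variable with characteristic function $f^{1/m}$ and $Z_1,\dots,Z_m$ independent copies of it, so that $X\stackrel{d}{=}Z_1+\cdots+Z_m$. Hence $Var(X)=m\,Var(Z)$, with the convention $m\cdot\infty=\infty$ (finitely many independent summands of finite variance have a finite-variance sum, so $Var(X)=\infty$ forces $Var(Z)=\infty$ as well). On the other hand $f_m(t)=f^{1/m}(\sqrt m\,t)$ is exactly the characteristic function of $\sqrt m\,Z$, so $Var(X_m)=m\,Var(Z)=Var(X)$; this handles the finite- and infinite-variance cases uniformly. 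Alternatively, in the finite-variance case one reads it off the local expansion $\log f(t)=-\tfrac12 Var(X)\,t^2+o(t^2)$ at $t=0$ together with $\log f_m(t)=\tfrac1m\log f(\sqrt m\,t)$.

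For the inequality I would use $\log f(t)=-\sigma^2 t^2-4\int_{+0}^{\infty}\sin^2(tx/2)\frac{1+x^2}{x^2}\,d\theta(x)$ and $g(t)=\exp(-\sigma^2 t^2)$. If $Var(X)=\infty$ there is nothing to prove, since $Var(g)=2\sigma^2<\infty$. If $Var(X)<\infty$, then $\int_{+0}^{\infty}(1+x^2)\,d\theta(x)<\infty$, which legitimizes differentiating twice under the integral sign; using $f'(0)=0$ (symmetry) and $\frac{d^2}{dt^2}\sin^2(tx/2)\big|_{t=0}=x^2/2$ one gets $Var(X)=-(\log f)''(0)=2\sigma^2+2\int_{+0}^{\infty}(1+x^2)\,d\theta(x)$, while $Var(g)=2\sigma^2$. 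Thus $Var(X)-Var(g)=2\int_{+0}^{\infty}(1+x^2)\,d\theta(x)\ge 0$, and this quantity vanishes precisely when $\theta$ has no mass on $(0,\infty)$, i.e. (by symmetry of $\theta$) when $\theta$ is concentrated at the origin, i.e. when $f$ is Gaussian. Hence for non-Gaussian $X$ the inequality is strict.

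I expect the only mildly delicate points to be the bookkeeping in the infinite-variance case and the justification of differentiation under the integral sign; the description of the equality case reduces to uniqueness of the L\'evy--Khinchine representation, so that a symmetric infinitely divisible law whose spectral measure is trivial is necessarily Gaussian. The identity $Var(X)=Var(X_m)$ is the easy half, being a direct consequence of the i.i.d. decomposition.
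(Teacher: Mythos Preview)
Your argument is correct. The paper's own proof is literally the single sentence ``This statement is now obvious,'' so you are supplying the details the authors left to the reader.

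For the identity $Var(X)=Var(X_m)$ you use exactly the intended mechanism: the $m$-fold i.i.d.\ decomposition together with the scaling in $f_m(t)=f^{1/m}(\sqrt m\,t)$. For the strict inequality you take a slightly different route than the one the paper sets up. The paper's framework (Theorem~\ref{th1} plus Corollary~\ref{c1}) suggests arguing via the decomposition $X=Y_1+Y_2$ with $Y_1$ Gaussian of variance $Var(g)$, so that $Var(X)=Var(g)+Var(Y_2)\ge Var(g)$, and then invoking Corollary~\ref{c1} to say that equality forces $Y_2$ to be degenerate, i.e.\ $f$ Gaussian. You instead read the gap $Var(X)-Var(g)=2\int_{+0}^{\infty}(1+x^2)\,d\theta(x)$ directly off the L\'evy--Khinchine representation. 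Both approaches are short and valid; yours is self-contained and makes the finiteness condition $\int(1+x^2)\,d\theta<\infty$ for $Var(X)<\infty$ explicit, while the paper's implicit route avoids the differentiation-under-the-integral step by reusing Corollary~\ref{c1}. Your handling of the infinite-variance case and the identification of the equality case via uniqueness of the L\'evy--Khinchine representation are both fine.
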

\begin{proof}This statement is now obvious.
\end{proof}

Let us look at higher moments of the distribution supposing they exist. Denote by $X(m)$ random variable with characteristic function $f_m(t)$. As above, we suppose that characteristic function $f(t)$ is symmetric and infinitely divisible. Suppose that $\mu_4(m)=\E X^4(m)<\infty$. It is clear that $\mu_2(m)=\mu_2(1)=\E X^2(1)$ does not depend on $m$. However, calculating forth derivative of $f_m(t)$ at $t=0$, it is not difficult to find that
\begin{equation}\label{eq2}
\mu_4(m)-3\mu_2^2(m) = m (\mu_4(1)-3\mu_2(1).
\end{equation}
Equality (\ref{eq2}) may be rewritten in the form
\begin{equation}\label{eq3}
\kappa(m) =m \kappa(1),
\end{equation}
where $\kappa(m)=\mu_4(m)/\mu_2^2(m)-3$ is the kurtosis of $X(m)$.

\section{Positive infinitely divisible random variables}\label{s2}
\setcounter{equation}{0}

Here we give results similar in some sense to that from previous section. Namely, let us consider positive random variable $W$ with Laplace transform
\[ L(s)=\E \exp(-sW), \; \; s>0.\]
Suppose that $W$ has infinitely divisible distribution\footnote{Here it means that for any positive integer $n$ there are $n$ non-negative i.i.d. random variables $W_1, \ldots, W_n$ such that $W=W_1+ \ldots +W_n$}. It is known (see, for example, \cite{Ph}), that
\begin{equation}\label{eqL1}
L(s) = \exp\Bigl\{- \int_0^{\infty}\frac{1-e^{-a s}}{1-e^{-a}}d\mu(a)\Bigr\}, \; \; s>0,
\end{equation}
where $\mu$ is a Borel measure. There is one-to-one map between Laplace transforms $L(s)$ of infinitely divisible distributions and measures $\mu$. In a way similar to that of Section \ref{s1}, we consider Laplace transform
\[ L_m(s) =L^{1/m}(ms) \]
for positive integer $m$.

\begin{thm}\label{th2}
Suppose that $L(s)$ is a infinitely divisible Laplace transform of positive random variable $W$. For any positive integer $m$ 
$L_m(s)=L^{1/m}(m s)$ is a Laplace transform. The following limit
\begin{equation}\label{eqL2}
\lim_{m\to \infty}L_m(s) =D(s),
\end{equation}
exists. Here $D(s)$ is Laplace transform of degenerate distribution. $\p\{W<x\} >0$ for all $x>0$ if and only if $D(s)=1$ for all $s>0$.
\end{thm}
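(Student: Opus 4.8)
The plan is to mimic the proof of Theorem~\ref{th1}, with the Gaussian term $-\sigma^2t^2$ of the exponent replaced by the drift term of \eqref{eqL1}. First I would check the preliminary assertion: since $L$ is infinitely divisible, \eqref{eqL1} shows that $L^{1/m}(s)=\exp\{-\int_0^{\infty}\frac{1-e^{-as}}{1-e^{-a}}\,d(\mu/m)(a)\}$ is again of the same form, hence the Laplace transform of some non-negative random variable $V_m$ (with $W$ the sum of $m$ i.i.d.\ copies of $V_m$); therefore $L_m(s)=L^{1/m}(ms)$ is the Laplace transform of $W(m):=mV_m$. In passing one gets $\E W(m)=\E W$ for all $m$ when the mean is finite, the analogue of Corollary~\ref{c2}.

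Next I would isolate the degenerate component. The kernel $\frac{1-e^{-as}}{1-e^{-a}}$ extends continuously to the value $s$ at $a=0$ and tends to $1$ as $a\to\infty$, and an elementary estimate shows it is $\ge\min(1,s)>0$ on $(0,\infty)$; hence the integral in \eqref{eqL1} can converge only if $\mu$ is a \emph{finite} measure. Put $b:=\mu(\{0\})\ge0$, the mass at the origin (which contributes exactly $bs$ to the exponent), so that $\log L(s)=-bs-\psi(s)$ with $\psi(s):=\int_{(0,\infty)}\frac{1-e^{-as}}{1-e^{-a}}\,d\mu(a)\ge0$; here $b>0$ is exactly the situation in which $W$ has a non-trivial degenerate (shift) component.

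The heart of the matter, the analogue of the $o(t^2)$ bound in Theorem~\ref{th1}, is the claim $\psi(s)=o(s)$ as $s\to\infty$, which I would prove by splitting the integral at $\varepsilon=1/\sqrt s$ for $s>1$: on $(0,\varepsilon)$ the inequalities $1-e^{-as}\le as$ and $1-e^{-a}\ge a/2$ bound the integrand by $2s$, so that piece is $\le 2s\,\mu\big((0,1/\sqrt s\,)\big)=o(s)$ since $\mu$ is finite; on $[\varepsilon,\infty)$ the inequalities $1-e^{-as}\le1$ and $1-e^{-a}\ge1-e^{-1/\sqrt s}$ bound it by $\mu\big((0,\infty)\big)/(1-e^{-1/\sqrt s})=O(\sqrt s)=o(s)$. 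It then follows that $\log L_m(s)=\tfrac1m\log L(ms)=-bs-\tfrac1m\psi(ms)\to-bs$, so the limit \eqref{eqL2} exists and $D(s)=e^{-bs}$, the Laplace transform of the distribution degenerate at $b$.

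Finally I would settle the characterization. Since $D\equiv1$ iff $b=0$, it remains to show that $b=0$ iff $\p\{W<x\}>0$ for all $x>0$. If $b>0$, then $L(s)\le e^{-bs}$, and feeding this into $e^{-s(b-\delta)}\p\{W\le b-\delta\}\le L(s)$ and letting $s\to\infty$ gives $\p\{W\le b-\delta\}=0$ for every $\delta>0$, i.e.\ $W\ge b$ a.s., so $\p\{W<b\}=0$. Conversely, if $b=0$ but $\p\{W<x_0\}=0$ for some $x_0>0$, then $W\ge x_0$ a.s., hence $L(s)\le e^{-sx_0}$ and $\psi(s)=-\log L(s)\ge x_0s$ for every $s>0$, contradicting $\psi(s)=o(s)$; so $\p\{W<x\}>0$ for all $x>0$. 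I expect the only real work to be the estimate $\psi(s)=o(s)$ (choosing the split point and verifying the elementary kernel inequalities, together with the remark that $\mu$ must be finite); everything else is bookkeeping parallel to Section~\ref{s1}.
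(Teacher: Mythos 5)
Your proposal is correct and takes essentially the same route as the paper: decompose $-\log L(s)$ into the drift term $bs$ coming from the jump of $\mu$ at zero plus the integral over $(0,\infty)$, show that integral is $o(s)$ as $s\to\infty$ by splitting at $1/\sqrt{s}$ just as in the proof of Theorem \ref{th1}, and conclude $D(s)=e^{-bs}$. The additional details you supply (finiteness of $\mu$, the kernel estimates, and the Laplace-transform argument identifying $b=0$ with $\p\{W<x\}>0$ for all $x>0$) correctly fill in steps the paper leaves implicit.
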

\begin{proof}
Let us write the integral in (\ref{eqL1}) as a sum of $\sigma s$ (where $\sigma$ is a jump of $\mu$ at zero) and 
\begin{equation}\label{eqL3}  
\int_{+0}^{\infty}\frac{1-e^{-a s}}{1-e^{-a}}d\mu(a). 
\end{equation}
Arguments very similar to used in the proof of Theorem \ref{th1} show that integral (\ref{eqL3}) is $o(s)$ as $s \to \infty$. 
To finish the proof it is enough to mention that $\exp\{-\sigma s\}$ is Laplace transform of the distribution, concentrated at the point $\sigma$.
\end{proof}

\section{Interpretation as return trip from Gaussian distribution}\label{s3}
\setcounter{equation}{0}

In the most simple variant of Central Limit Theorem one has the following statement. Let $\xi_1, \xi_2, \ldots $ be a sequence of independent identically distributed (i.i.d.) symmetric random variables with finite variance. Then the normalized sum $S_m =\frac{1}{\sqrt{m}}\sum_{j=1}^m \xi_j$ converges as $m \to \infty$ in distribution to Gaussian law with the same variance as that of $\xi_1$. The results above may be interpreted in the following way. We have characteristic function $f(t)$ of the sum $S_m$, supposing it is infinitely divisible. Then the characteristic function on $\xi_1=X(m)$ is $f_m(t)=f^{1/m}(\sqrt{m}t)$. In the case when $f(t)$ is not Gaussian characteristic function it is natural to assume, that $f_m(t)$ is far in therms of a distance from Gaussian characteristic function with the same variance. It is possible to give an estimate non-closeness to this Gaussian distribution. 

Let us formulate the direct Theorem for the convergence to Gaussian distribution. Let $\xi_1, \xi_2, \ldots $ be a sequence of independent identically distributed (i.i.d.) symmetric random variables with finite variance, and let $S_m =\frac{1}{\sqrt{m}}\sum_{j=1}^m \xi_j$ be their normalized sum. Define probability distance between two random variables $U$ and $V$ as 
\begin{equation}\label{eq4}
\lambda_r(U,V)=\sup_{t \in \R^1}\frac{|f_U(t)-f_V(t)|}{|t|^r},
\end{equation}
where $f_U(t)$ and $f_V(t)$ are characteristic functions of random variables $U$ and $V$ (for the definitions see \cite{Z}). Here we are considering $r>2$. It is known (see, for example, \cite{Kl}) that
\begin{equation}\label{eq5}
\lambda_r (S_m,Z) \leq m^{-(r/2-1)}\lambda_r(\xi_1,Z),
\end{equation}
where $Z$ is Gaussian random variable with the same variance as that of $\xi_1$.  Of course, we assume that $\lambda_r(\xi_1,Z) < \infty$.

The inequality (\ref{eq5}) may be rewritten in backward as
\begin{equation}\label{eq6}
\lambda_r(X(m), Z) \geq  m^{r/2-1}\lambda_r(X(1),Z). 
\end{equation}
The inequality (\ref{eq6}) shows that $X(m)$ and Gaussian $Z$ have to be far from each other for large values of $m$. 

This remark may be of some use in Finance. In \cite{KV} there was shown that the distributions of main financial indexes do not have heavy tails. However, their distributions are not Gaussian. Because 
corresponding random variables may be represented as sums of very large number of of i.i.d. random variables it can be approximated by infinitely divisible distributions. From previous remark we see, the distributions of summands are rather far from Gaussian. Let us show, the distribution of summands can be approximated by a stable distribution better than by Gaussian one just in the case when the summands distribution has finite second moment. If so, we can use central pre-limit theorem to show corresponding fact for the sums themselves (see, for example, \cite{Kl}). We will not show here that the summands distribution may be approximated by a stable distribution. Let us just mention this can be used in the same manner as in \cite{KV} for symmetrized gamma-distribution. 

The transformations from $f(t)$ to $f_m(t)=f^{1/m}(\sqrt{m}t)$ and from $L(s)$ to $L_m(s)=L^{1/m}(ms)$ give "historical" distribution of summands. In view of Theorems \ref{th1} and \ref{th2}, under some conditions, these distributions were born by degenerated at zero distribution. For the first look it seems to be paradoxical, however it is not. The reason is that in praktice one does not need to come to limit as $m$ tends to infinity.

\end{document}